\documentclass[a4paper, reqno]{amsart}

\usepackage[utf8]{inputenc}
\usepackage{mathtools}
\usepackage{graphicx}
\usepackage{array, longtable, adjustbox}
\usepackage[table]{xcolor}

\newtheorem{lemma}{Lemma}[section]
\newtheorem{theorem}[lemma]{Theorem}

\newcommand{\C}{\mathbb{C}}
\newcommand{\R}{\mathbb{R}}
\newcommand{\Scal}{\mathcal{S}}

\newcommand{\tr}{\mathrm{tr}}
\newcommand{\std}{\mathrm{std}}
\newcommand{\alt}{\mathrm{alt}}
\newcommand{\trv}{\mathrm{trv}}

\DeclareMathOperator{\Hom}{Hom}


\title{Improving the semidefinite programming bound for the kissing
  number by exploiting polynomial symmetry}

\author{Fabrício Caluza Machado} 

\author{Fernando Mário de Oliveira Filho} 

\thanks{The first author was supported by the São Paulo State Research
  Foundation (FAPESP) under grants 2015/05648-4 and 2014/16058-0. The
  second author was partially supported by FAPESP grant 2013/03447-6}

\address{F.C. Machado and F.M.~de Oliveira Filho, Instituto de
  Matemática e Estatística, Rua do Matão 1010, 05508-090 São Paulo/SP,
  Brazil.}
\email{(fabcm1, fmario)@gmail.com}

\subjclass{52C17, 90C22}

\date{September 16, 2016.}

\begin{document}

\begin{abstract}
  The \textit{kissing number of~$\R^n$} is the maximum number of
  pairwise-nonoverlapping unit spheres that can simultaneously touch a
  central unit sphere. Mittelmann and Vallentin (2010), based on the
  semidefinite programming bound of Bachoc and Vallentin (2008),
  computed the best known upper bounds for the kissing number for
  several values of~$n \leq 23$. In this paper, we exploit the
  symmetry present in the semidefinite programming bound to provide
  improved upper bounds for~$n = 9$, \dots,~$23$.
\end{abstract}

\maketitle
\markboth{F.C. Machado and F.M. de Oliveira Filho}{Improving upper
  bounds for the kissing number through symmetry}


\section{Introduction}

The \emph{kissing number problem} asks for the maximum number $\tau_n$
of pairwise-nonoverlapping unit spheres that can simultaneously touch
a central unit sphere in $n$-dimensional Euclidean space.  Its value
is known only for $n = 1$, 2, 3, 4, 8, and~24. The case $n = 3$ is
already difficult; a detailed proof that $\tau_3 = 12$ appeared only
in~1953, given by Schütte and van der Waerden~\cite{SchutteW1953}.

For~$x$, $y \in \R^n$, denote by~$x \cdot y = x_1 y_1 + \cdots + x_n
y_n$ the Euclidean inner product and let~$S^{n-1} = \{\, x \in \R^n :
x \cdot x = 1\, \}$ be the $(n-1)$-dimensional unit sphere. The
\emph{angular distance} between~$x$, $y \in S^{n-1}$ is~$d(x, y) =
\arccos(x \cdot y)$. A \emph{spherical code with minimum angular
  distance~$\theta$} is a set~$C \subseteq S^{n-1}$ such that~$d(x, y)
\geq \theta$ for all distinct~$x$, $y \in C$. Determining the parameter
\[
A(n, \theta) = \max\{\, |C| : \text{$C \subseteq S^{n-1}$ and~$d(x, y)
    \geq \theta$ for all distinct~$x$, $y \in C$}\,\}
\]
is a problem of interest in communication theory (see Conway and
Sloane~\cite{ConwayS1988}, Chapters~1 and~3). The kissing number~$\tau_n$
equals~$A(n, \pi/3)$.

Delsarte, Goethals, and Seidel~\cite{DelsarteGS1977} proposed an upper
bound for~$A(n, \theta)$, known as the linear programming bound, that
was later used by Odlyzko and Sloane~\cite{OdlyzkoS1979}, and
independently Levenshtein~\cite{Levenshtein1979}, to prove
$\tau_8 = 240$ and $\tau_{24} = 196560$. Musin~\cite{Musin2008} used a
stronger version of this bound to show $\tau_{4} = 24$ and Bachoc and
Vallentin~\cite{BachocV2008} strengthened it further via semidefinite
programming.  Mittelmann and Vallentin~\cite{MittelmannV2010} used the
semidefinite programming bound to provide a table with the best upper
bounds for the kissing number for~$n \leq 24$.

The semidefinite programming bound of Bachoc and Vallentin is based on
an infinite-dimensional polynomial optimization problem. To obtain a
finite optimization problem, the maximum degree of the polynomials
involved is restricted. By exploiting the symmetry displayed by the
polynomials in this problem, using techniques such as the ones
described by Gatermann and Parrilo~\cite{GatermannP2004} and Bachoc,
Gijs\-wijt, Schrijver, and Vallentin~\cite{BachocGSV2012}, it is possible to
use polynomials of higher degree, and as a result one obtains improved
upper bounds for the kissing number in dimensions~9 through~23. The
resulting problems are also more stable and can be solved in less time
in comparison to the problems obtained by Mittelmann and
Vallentin. Finally, the numerical results are rigorously verified
using a method similar to the one presented by Dostert, Guzmán,
Oliveira, and Vallentin~\cite{DostertGOV2015}.


\section{The semidefinite programming bound}\label{sec:sdp-bound}

Let us start by recalling the semidefinite programming bound of Bachoc
and Vallentin~\cite{BachocV2008}. Let~$P_k^n(u)$ denote the Jacobi
polynomial of degree~$k$ and parameters~$((n-3) / 2, (n-3) / 2)$,
normalized so that~$P_k^n(1) = 1$ (for background on orthogonal
polynomials, see e.g.~the book by Szegö~\cite{Szego1975}).

Fix~$d > 0$. Let~$Y_k^n$ be the~$(d - k + 1) \times (d - k + 1)$
matrix whose entries are polynomials on the variables~$u$, $v$, $t$
given by
\[ 
(Y_k^n)_{i,j}(u,v,t) = P_i^{n+2k}(u)P_j^{n+2k}(v)Q_k^{n-1}(u,v,t)
\]
for~$0 \leq i, j \leq d - k$, where
\[ 
Q_k^{n-1}(u,v,t) = \big((1-u^2)(1-v^2)\big)^{k/2}P_k^{n-1}\biggl(\frac{t 
-uv}{\sqrt{(1-u^2)(1-v^2)}}\biggr). 
\]

The symmetric group on three elements~$\Scal_3$ acts on a triple~$(u,
v, t)$ by permuting its components. This induces an action
\begin{equation}
\label{eq:s3-action}
\sigma p(u, v, t) = p(\sigma^{-1} (u, v, t))
\end{equation}
on~$\R[u, v, t]$, where~$\sigma \in \Scal_3$. Matrix~$S_k^n$ is
obtained from~$Y_k^n$ by symmetrization with respect to this action:
\[
S_k^n(u, v, t) = \frac{1}{6} \sum_{\sigma \in \Scal_3} \sigma Y_k^n(u,
v, t).
\]

For square matrices~$A$, $B$ of the same dimensions,
write~$\langle A, B \rangle = \tr(B^t A)$. For a
matrix~$A \in \R^{n \times n}$, we write~$A \succeq 0$ to mean
that~$A$ is positive semidefinite.  Fix a dimension~$n \geq 3$ and an
angle~$\theta$ and let~$\Delta$ be the set of all
triples~$(u, v, t) \in \R^3$ that are possible inner products between
three points in a spherical code in~$S^{n-1}$ of minimum angular
distance~$\theta$, that is, $(u, v, t) \in \Delta$ if and only if
there are points~$x$, $y$, $z \in S^{n-1}$ with pairwise minimum
angular distance at least~$\theta$ such that~$u = x \cdot y$,
$v = x \cdot z$, and~$t = y \cdot z$.  The semidefinite programming
bound of Bachoc and Vallentin~\cite{BachocV2008} for~$A(n, \theta)$ is
given by the following optimization problem, where~$J$ is the all-ones
matrix:
\begin{equation}
\label{eq:sdp-bound}
\begin{array}{rl}
\min&\displaystyle1 + \sum_{k=1}^d a_k + b_{11} + \langle J, F_0\rangle\\[5pt]
&\text{(i)\enspace}\displaystyle\sum_{k=1}^d a_k P_k^n(u) + 2 b_{12} + b_{22}\\[5pt]
&\phantom{(i)\enspace}\displaystyle\qquad {} + 3 \sum_{k=0}^d
  \langle S_k^n(u, u, 1), F_k \rangle \leq -1\quad\text{for~$u \in
  [-1, \cos\theta]$},\\
&\text{(ii)\enspace}\displaystyle b_{22} + \sum_{k=0}^d \langle S_k^n(u, v, t), F_k\rangle \leq
  0\quad\text{for~$(u, v, t) \in \Delta$},\\[5pt]
&a_k \geq 0\quad\text{for~$k = 1$, \dots,~$d$},\\[5pt]
&B = \begin{psmallmatrix}
b_{11}&b_{12}\\
b_{21}&b_{22}
\end{psmallmatrix} \succeq 0,\\[5pt]
&\text{$F_k \in \R^{(d - k + 1) \times (d - k + 1)}$ and~$F_k \succeq
  0$ for~$k = 0$, \dots,~$d$}.
\end{array}
\end{equation}

Bachoc and Vallentin showed the following theorem:

\begin{theorem}
  If~$(a_k, B, F_k)$ is a feasible solution
  of~\eqref{eq:sdp-bound},
  then
\[
A(n, \theta) \leq 1 + \sum_{k=1}^d a_k + b_{11} + \langle J,
F_0\rangle.
\]
\end{theorem}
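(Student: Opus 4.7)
The plan is to take an optimal spherical code $C \subseteq S^{n-1}$ with minimum angular distance $\theta$ and $|C| = N = A(n, \theta)$, and to sum the left-hand sides of~(i) and~(ii) over pairs and triples of points of $C$ so that, after invoking two positivity statements, everything collapses to the desired bound on $N$.

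The representation-theoretic input is twofold. Delsarte--Schoenberg positivity gives $\sum_{x, y \in C} P_k^n(x \cdot y) \geq 0$ for every $k \geq 0$, and the matrix analogue due to Bachoc and Vallentin states that
\[
M_k := \sum_{(x,y,z) \in C^3} S_k^n(x \cdot y, x \cdot z, y \cdot z) \succeq 0.
\]
This is the deep step; it rests on recognizing $Y_k^n$ entrywise as an inner product provided by the addition formula for spherical harmonics, and the passage from $Y_k^n$ to $S_k^n$ is free because $C^3$ is itself permutation-symmetric. Pairing $M_k$ against $F_k \succeq 0$ yields $\sum_{(x,y,z) \in C^3} \langle S_k^n(x \cdot y, x \cdot z, y \cdot z), F_k \rangle \geq 0$.

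Let $L(u,v,t)$ denote the left-hand side of~(ii). I would sum $L(x \cdot y, x \cdot z, y \cdot z)$ over all ordered $(x, y, z) \in C^3$. Triples of three distinct points land in $\Delta$, so their contribution is $\leq 0$ by~(ii). Triples with all three points equal contribute $N \cdot L(1, 1, 1)$; since the factor $((1 - u^2)(1 - v^2))^{k/2}$ in $Q_k^{n-1}$ vanishes for $k \geq 1$, one has $S_0^n(1,1,1) = J$ and $S_k^n(1,1,1) = 0$ for $k \geq 1$, so this term reduces to $N[b_{22} + \langle J, F_0 \rangle]$. Triples with exactly one coincidence, by the $\Scal_3$-invariance of $L$, collapse into $3 \sum_{x \neq y} L(x \cdot y, x \cdot y, 1)$, and each summand is bounded above using~(i); the resulting $\sum_k a_k P_k^n(x \cdot y)$ piece is then absorbed using Delsarte positivity, which is legal because $a_k \geq 0$.

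Combining these estimates against the master inequality $\sum_{(x,y,z) \in C^3} L \geq b_{22} N^3$ (which comes from $M_k \succeq 0$) and dividing through by $N$ leads, after routine algebra in which the coefficients of $b_{22}$ and $b_{12}$ telescope, to
\[
(N - 1) + 2(N - 1) b_{12} + (N - 1)^2 b_{22} \leq \sum_{k=1}^{d} a_k + \langle J, F_0 \rangle.
\]
The closing trick is to add $b_{11}$ to both sides: the left-hand side then contains $b_{11} + 2(N-1) b_{12} + (N-1)^2 b_{22}$, which is the value of the quadratic form of $B$ on the vector $(1, N - 1)^t$ and is therefore nonnegative since $B \succeq 0$. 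Dropping that term gives $N \leq 1 + \sum_k a_k + b_{11} + \langle J, F_0 \rangle$, as required. The principal obstacle is the matrix positivity $M_k \succeq 0$, which depends on the representation theory of $\mathrm{O}(n)$ developed by Bachoc and Vallentin; the rest is bookkeeping, with the pleasant observation that the $B$ block is arranged precisely so that its PSD condition absorbs the $N$-dependent quadratic terms produced by the near-diagonal contributions.
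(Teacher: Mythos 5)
The paper does not reprove this theorem; it is stated as a citation to Bachoc and Vallentin~\cite{BachocV2008}, with no argument given. Your reconstruction is the correct argument from that paper: decompose the sum of $L(x\cdot y, x\cdot z, y\cdot z)$ over $C^3$ into the $N$ diagonal triples, the $3N(N-1)$ triples with exactly one coincidence, and the fully distinct triples; use the master positivity $\sum_{(x,y,z)\in C^3}\langle S_k^n, F_k\rangle \geq 0$ (which holds equivalently with $Y_k^n$, since summing over the symmetric set $C^3$ is what symmetrizes) together with constraint~(ii) on distinct triples, constraint~(i) plus Delsarte positivity on near-diagonal ones, and $S_0^n(1,1,1)=J$, $S_k^n(1,1,1)=0$ for $k\geq 1$ on the diagonal; then absorb the $N$-dependent quadratic with $B\succeq 0$ evaluated at $(1,N-1)^t$. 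I checked the routine algebra you describe: it does reduce to $(N-1)+2(N-1)b_{12}+(N-1)^2 b_{22}\leq \sum_k a_k + \langle J,F_0\rangle$, and adding $b_{11}$ finishes it. The one place where you correctly flag (rather than hide) a gap is the matrix positivity $\sum_{(x,y,z)\in C^3} Y_k^n(x\cdot y, x\cdot z, y\cdot z)\succeq 0$; that is indeed the substantial content of Bachoc--Vallentin, proved via the decomposition of $L^2(S^{n-1})$ into $O(n)$-irreducibles and an addition-formula computation, and it is consistent with this paper's treatment to take it as given.
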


Problem~\eqref{eq:sdp-bound} has infinitely many constraints of
types~(i) and~(ii). These are polynomial constraints: the right-hand
side of~(i) minus the left-hand side is a univariate polynomial
on~$u$, which is required to be nonnegative on the
interval~$[-1, \cos\theta]$; the situation is similar for~(ii), but
then we have a multivariate polynomial on~$u$, $v$, $t$.

Polynomial constraints such as~(i) and~(ii) can be rewritten with
sum-of-squares polynomials and semidefinite programming. Writing a
(univariate or multivariate) polynomial~$p$ as a sum of squares
\[
p = q_1^2 + \cdots + q_s^2
\]
of polynomials~$q_i$ is a sufficient condition for~$p$ to be
nonnegative everywhere. Similarly, let
\[
D = \{\, x \in \R^n : \text{$g_1(x) \geq 0$, \dots,~$g_m(x) \geq
  0$}\,\},
\]
where the~$g_i$ are polynomials, be a basic and closed semialgebraic
set. A sufficient condition for a multivariate polynomial~$p$ to be
nonnegative on~$D$ is for there to exist sum-of-squares
polynomials~$q_0$, $q_1$, \dots,~$q_m$ such that
\begin{equation}
\label{eq:domain-sos}
p = q_0 + q_1 g_1 + \cdots + q_m g_m.
\end{equation}

Sum-of-squares polynomials can be represented by positive semidefinite
matrices. Indeed, say~$p \in \R[x]$, with~$x = (x_1, \ldots, x_n)$, is
a polynomial of degree~$2d$ and let~$B \subseteq \R[x]$ be the set of
all monomials of degree up to~$d$.  Let~$v_B\colon B \to \R[x]$ be such
that~$v_B(r) = r$ for~$r \in B$. We see~$v_B$ as a vector indexed
by~$B$ whose entries are polynomials, so that~$v_B v_B^t$ is a matrix
whose entry~$(r, s)$, for~$r$, $s \in B$, is the
polynomial~$v_B(r) v_B(s) = rs$.  Then~$p$ is a sum of squares if and
only if there is a positive semidefinite
matrix~$Q\colon B \times B \to \R$ such that
\begin{equation}
\label{eq:sos-rep}
p = v_B^t Q v_B = \langle v_B v_B^t, Q \rangle.
\end{equation}
For~$x \in \R^n$, we also write~$v_B(x)$ for the vector obtained
from~$v_B$ by evaluating every entry on~$x$; analogously,
$(v_B v_B^t)(x)$ is the matrix obtained from~$v_B v_B^t$ by evaluating
every entry on~$x$. So, for~$x \in \R^n$,
\[
p(x) = v_B(x)^t Q v_B(x) = \langle (v_B v_B^t)(x), Q\rangle.
\]

Using this relation, we may rewrite constraints~(i) and~(ii)
of~\eqref{eq:sdp-bound}. Let~$g(u) = (u + 1)(\cos\theta - u)$.
Constraint~(i) can be then rewritten as
\begin{multline}
\label{eq:i-rewrite}
\sum_{k=1}^d a_k P_k^n(u) + 2 b_{12} + b_{22} + 3 \sum_{k=0}^d
  \langle S_k^n(u, u, 1), F_k \rangle\\
 + \langle V_0(u), Q_0 \rangle +
  \langle g(u) V_1(u), Q_1\rangle = -1
\end{multline}
with~$Q_0$, $Q_1 \succeq 0$, where~$V_0 = v_{B_0} v_{B_0}^t$
with~$B_0 = \{1, u, u^2, \ldots, u^d\}$ and~$V_1 = v_{B_1} v_{B_1}^t$
with~$B_1 = \{ 1, u, u^2, \ldots, u^{d-1} \}$, so that the maximum
degree of any polynomial appearing on the left-hand side
of~\eqref{eq:i-rewrite} is~$2d$. Notice that two more variable
matrices have been added to our optimization problem, namely~$Q_0$
and~$Q_1$.

To rewrite constraint~(ii), observe that~$\Delta$ is a basic and
closed semialgebraic set. Indeed, we have
\[
\Delta = \{\, (u, v, t) \in \R^3 : \text{$g_i(u, v, t) \geq 0$
  for~$i = 1$, \dots,~4}\,\},
\]
where
\begin{equation}
\label{eq:delta-polys}
\begin{aligned}
g_1(u, v, t) &= g(u),&g_2(u, v, t) &= g(v),\\
g_3(u, v, t) &= g(t),&g_4(u, v, t) &= 1 + 2uvt - u^2 - v^2 - t^2.
\end{aligned}
\end{equation}
Constraint~(ii) can then be similarly rewritten
using~\eqref{eq:domain-sos}, requiring us to add five more variable
matrices to the problem: one for the polynomial~$q_0$, plus one for
each polynomial multiplying one of the~$g_i$ polynomials that
define~$\Delta$. In the next section we will see that, in order to
exploit the symmetry of the polynomials in the~$S_k^n$ matrices, we
need to use different polynomials to represent~$\Delta$; we will
therefore leave the rewriting of constraint~(ii) for later.

Finally, notice that the identity in~\eqref{eq:i-rewrite} is not a
linear constraint on the entries of the variable matrices, but rather
an identity between polynomials. It can however be represented as
several linear constraints, by taking any basis of~$\R[u]_{\leq 2d}$,
the space of univariate polynomials of degree up to~$2d$, expanding
both left and right-hand sides on this basis, and comparing
coefficients. We have to do something similar for constraint~(ii), but
then we need to use a basis of the space~$\R[u, v, t]_{\leq 2d}$;
in~\S\ref{sec:results} we will discuss our choices for such bases. 

Using sum-of-squares polynomials and their relation with semidefinite
programming, we see therefore how to obtain from~\eqref{eq:sdp-bound}
a semidefinite programming problem any feasible solution of which
provides an upper bound for~$A(n, \theta)$.


\section{Exploiting symmetry}

If we rewrite constraint~(ii) of~\eqref{eq:sdp-bound} using
sum-of-squares polynomials as in~\eqref{eq:domain-sos}, then the
largest variable matrix we need will be indexed by all monomials on
variables~$u$, $v$, $t$ of degree at most~$d$. There
are~$\binom{d+3}{3}$ such monomials, hence for~$d = 15$ the largest
matrix will be~$816 \times 816$. So even for moderate values of~$d$ we
get quite large problems that cannot be easily solved in practice.

The polynomials occurring in the~$S_k^n$ matrices are however
invariant under the action~\eqref{eq:s3-action} of~$\Scal_3$. Thanks
to this fact it is possible to block-diagonalize the matrices needed
to represent sum-of-squares polynomials when rewriting
constraint~(ii), and this leads us to smaller and more stable
problems: the block structure of a variable matrix can be informed to
the solver and is used to speed up computations. (The general theory
of symmetry reduction for semidefinite programming has been described
e.g.\ by Bachoc, Gijswijt, Schrijver, and Vallentin~\cite{BachocGSV2012};
Gatermann and Parrilo~\cite{GatermannP2004} deal with the case of
sum-of-squares problems.)

The left-hand side of constraint~(ii) is an invariant polynomial that
should be nonpositive on~$\Delta$. A sufficient condition for this to
hold is for there to exist sum-of-squares polynomials~$q_0$,
\dots,~$q_4$ such that
\begin{equation}
\label{eq:ii-ansatz}
b_{22} + \sum_{k=0}^d \langle S_k^n, F_k\rangle + q_0 + q_1
g_1 + \cdots + q_4 g_4 = 0,
\end{equation}
with~$g_i$ as in~\eqref{eq:delta-polys}. The issue here is that,
though the entries of the~$S_k^n$ matrices are invariant,
polynomials~$g_i$ are not, and hence the~$q_i$ polynomials cannot be
taken to be invariant. The domain~$\Delta$ is itself invariant
however, and we may represent it with invariant polynomials.

\begin{lemma}
\label{lem:inv-delta}
Consider the polynomials
\begin{equation}
\label{eq:inv-delta-polys}
\begin{aligned}
s_1 &= g_1 + g_2 + g_3,&s_2 &= g_1 g_2 + g_1 g_3 + g_2 g_3,\\
s_3 &= g_1 g_2 g_3,&s_4 &= g_4,
\end{aligned}
\end{equation}
with~$g_i$ as in~\eqref{eq:delta-polys}. Then
\[
\Delta = \{\, (u, v, t) \in \R^3 : \text{$s_i(u, v, t) \geq 0$
  for~$i = 1$, \dots,~4}\,\}.
\]
\end{lemma}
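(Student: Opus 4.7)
My plan is to split the set equality into two inclusions, reducing the problem to the symmetric-functions fact that three real numbers are all nonnegative if and only if their three elementary symmetric functions are all nonnegative. Since $s_4 = g_4$, the fourth inequality plays no role in the reduction, so the only work is to show that for $(u,v,t) \in \R^3$, the conditions $g_1, g_2, g_3 \geq 0$ are equivalent to $s_1, s_2, s_3 \geq 0$, where $s_1, s_2, s_3$ are the elementary symmetric polynomials in $g_1, g_2, g_3$ as defined in~\eqref{eq:inv-delta-polys}. Given this equivalence, combining with $s_4 = g_4$ and the description of $\Delta$ recalled just before the lemma finishes the proof.

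For the forward inclusion $\Delta \subseteq \{s_i \geq 0\}$, I would simply observe that if $g_1, g_2, g_3 \geq 0$ then $s_1, s_2, s_3$, being sums of products of nonnegative real numbers, are also nonnegative, and $s_4 = g_4 \geq 0$ is immediate. No computation is required.

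For the reverse inclusion, suppose $s_1(u,v,t), s_2(u,v,t), s_3(u,v,t) \geq 0$ and $s_4(u,v,t) \geq 0$ at a point. Writing $a = g_1(u,v,t)$, $b = g_2(u,v,t)$, $c = g_3(u,v,t)$, the values $a, b, c$ are precisely the roots of the monic cubic
\[
p(x) = (x - a)(x - b)(x - c) = x^3 - s_1 x^2 + s_2 x - s_3.
\]
For any real $x < 0$, the four terms $x^3$, $-s_1 x^2$, $s_2 x$, and $-s_3$ are each nonpositive and the first is strictly negative, so $p(x) < 0$; hence $p$ has no negative root, which gives $a, b, c \geq 0$. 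Together with $g_4 = s_4 \geq 0$ this places $(u,v,t)$ in $\Delta$.

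I do not expect any real obstacle: the content of the argument is the standard fact about elementary symmetric polynomials being a sign-detector for nonnegativity of real tuples, and the rest is a direct translation between the two descriptions of $\Delta$.
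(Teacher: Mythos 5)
Your proof is correct, and it takes a genuinely different route from the paper. The paper argues the reverse inclusion by contraposition with a short chain of sign deductions: assuming WLOG that $g_1 < 0$ and that $s_2, s_3 \geq 0$, it derives $g_2 g_3 \leq 0$ from $s_3 \geq 0$, then $g_2 + g_3 \leq 0$ from $s_2 \geq 0$, and concludes $s_1 = g_1 + g_2 + g_3 < 0$. Your argument instead packages $g_1, g_2, g_3$ as the roots of the monic cubic $p(x) = x^3 - s_1 x^2 + s_2 x - s_3$ and observes that each of the four terms is $\leq 0$ for $x < 0$, with the leading term strictly negative, so $p$ has no negative root. This is a clean Descartes-type sign argument that avoids both the WLOG reduction and the cascade of inequality manipulations; it also makes the underlying general principle (nonnegativity of elementary symmetric functions detects nonnegativity of a real tuple) explicit in a way the paper's proof does not. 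The paper's version is more elementary in flavor and self-contained, but yours is shorter and, arguably, more transparent about why the lemma is true.
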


\begin{proof}
Since~$s_1$, \dots,~$s_4$ are positive combinations of products
of~$g_1$, \dots,~$g_4$, we have that $g_i(u,v,t) \geq 0$ for
$i = 1$, \dots,~$4$ implies $s_i(u,v,t) \geq 0$ for~$i = 1$,
\dots,~$4$.

For the converse, we may assume that~$g_1(u,v,t) < 0$.
Suppose $s_2(u,v,t)$, $s_3(u, v, t) \geq 0$.
Then $(g_1 g_2 g_3)(u,v,t) \geq 0$ and so $(g_2 g_3)(u,v,t) \leq 0$.
Moreover, $(g_1 g_2 + g_1 g_3 + g_2 g_3)(u,v,t) \geq 0$ implies that
\[
(g_1 (g_2 + g_3))(u,v,t) \geq -(g_2 g_3)(u,v,t) \geq 0,
\]
and so $(g_2 + g_3)(u,v,t) \leq 0$, whence
$s_1(u,v,t) = (g_1 + g_2 + g_3)(u,v,t) < 0$.
\end{proof}

Since the~$s_i$ are invariant, if in~\eqref{eq:ii-ansatz} we replace
the~$g_i$ by~$s_i$, then we may assume without loss of generality that
the~$q_i$ polynomials are also invariant. We may then use the
following theorem in order to represent each polynomial~$q_i$
(cf.~Gatermann and Parrilo~\cite{GatermannP2004}).

\begin{theorem}
\label{thm:block-diag}
For each integer~$d > 0$, there are square matrices~$V^\trv_d$, $V^\alt_d$,
and~$V^\std_d$, whose entries are invariant polynomials in~$\R[u, v,
t]_{\leq 2d}$, such that a polynomial~$p \in \R[u, v, t]_{\leq 2d}$ is
invariant and a sum of squares if and only if there are positive
semidefinite matrices~$Q^\trv$, $Q^\alt$, and~$Q^\std$ of appropriate
sizes satisfying
\[
p = \langle V^\trv_d, Q^\trv\rangle + \langle V^\alt_d, Q^\alt \rangle +
\langle V^\std_d, Q^\std\rangle.
\]
If moreover the dimensions of the matrices~$V^\trv_d$, $V^\alt_d$,
and~$V^\std_d$ are~$a$, $b$, and~$c$, respectively,
then~$\binom{d+3}{3} = a + b + 2c$.
\end{theorem}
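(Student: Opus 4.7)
The plan is to apply the standard symmetry-reduction machinery for sum-of-squares problems (Gatermann and Parrilo~\cite{GatermannP2004}). Over $\R$, the symmetric group $\Scal_3$ has exactly three irreducible representations: the trivial (dimension~$1$), the alternating/sign (dimension~$1$), and the standard (dimension~$2$). Applied to the action~\eqref{eq:s3-action} on $\R[u,v,t]_{\leq d}$, these give an isotypic decomposition
\[
\R[u,v,t]_{\leq d} = W^\trv \oplus W^\alt \oplus W^\std
\]
with $\dim W^\trv = a$, $\dim W^\alt = b$, and $\dim W^\std = 2c$; the dimension identity $\binom{d+3}{3} = a + b + 2c$ is then immediate. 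I choose a symmetry-adapted basis: a basis $e_1, \ldots, e_a$ of invariants for $W^\trv$, a basis $f_1, \ldots, f_b$ of anti-invariants for $W^\alt$, and pairs $(h_{1,j}, h_{2,j})$ for $j = 1, \ldots, c$ spanning $W^\std$, where each pair transforms as the standard basis of the standard irrep.

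Now suppose $p \in \R[u,v,t]_{\leq 2d}$ is invariant and a sum of squares. By~\eqref{eq:sos-rep}, write $p = \langle v_B v_B^t, Q \rangle$ with $Q \succeq 0$ indexed by the monomial basis $B$ of degree up to~$d$. Averaging $Q$ over $\Scal_3$, acting on matrices via the representation induced on $B$, produces an equivariant PSD matrix $\tilde Q$ still satisfying $p = \langle v_B v_B^t, \tilde Q \rangle$, since $p$ is invariant. Changing basis from $v_B$ to the symmetry-adapted basis, Schur's lemma forces $\tilde Q$ to be block-diagonal with a block $Q^\trv$ on $W^\trv$, a block $Q^\alt$ on $W^\alt$, and a block on $W^\std$ of the form $I_2 \otimes Q^\std$ for some PSD $c \times c$ matrix $Q^\std$. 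Defining
\[
(V^\trv_d)_{i,j} = e_i e_j, \qquad (V^\alt_d)_{i,j} = f_i f_j, \qquad (V^\std_d)_{i,j} = h_{1,i} h_{1,j} + h_{2,i} h_{2,j},
\]
a direct expansion of $\langle v_B v_B^t, \tilde Q\rangle$ in the new basis yields $p = \langle V^\trv_d, Q^\trv\rangle + \langle V^\alt_d, Q^\alt\rangle + \langle V^\std_d, Q^\std\rangle$. The entries of $V^\trv_d$ and $V^\alt_d$ are manifestly invariant; for $V^\std_d$, invariance follows because $h_{1,i}h_{1,j} + h_{2,i}h_{2,j}$ is the contraction of the tensor $h_{\bullet,i} \otimes h_{\bullet,j}$ against the $\Scal_3$-invariant inner product on the standard irrep.

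The converse is routine: given PSD matrices $Q^\trv, Q^\alt, Q^\std$ with Cholesky factorizations $Q^{\bullet} = L^{\bullet} (L^{\bullet})^t$, the polynomial $\langle V^\trv_d, Q^\trv\rangle + \langle V^\alt_d, Q^\alt\rangle + \langle V^\std_d, Q^\std\rangle$ is visibly a sum of squares (apply $(L^{\bullet})^t$ to the vectors of basis polynomials) and is invariant because each of the three terms is. The main technical obstacle is the standard isotypic block: one must verify carefully that $\Scal_3$-equivariance forces the scalar tensor structure $I_2 \otimes Q^\std$, a consequence of Schur's lemma applied to the endomorphism algebra $\mathrm{End}_{\Scal_3}(W^\std) \cong \R^{c \times c}$, and then check that the ensuing definition of $(V^\std_d)_{i,j}$ does produce invariant entries. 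Once these two points are in place, the theorem follows by the bookkeeping described above.
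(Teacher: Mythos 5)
Your proposal is correct and follows essentially the same route as the paper: average the Gram matrix over $\Scal_3$, pass to a symmetry-adapted basis for the isotypic decomposition of $\R[u,v,t]_{\leq d}$, invoke Schur's lemma to get the block structure (scalar blocks on the trivial and sign components, $I_2\otimes Q^\std$ on the standard component), and read off the matrices $V^\trv_d$, $V^\alt_d$, $V^\std_d$ as the Gram-like matrices built from the symmetry-adapted basis vectors. The only difference is one of presentation: the paper first proves the statement over $\C$ for an arbitrary finite group acting by coordinate permutations and then specializes to $\Scal_3$ using the fact that all its irreducibles are real, whereas you argue directly over $\R$ for $\Scal_3$. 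One small thing to make fully precise in your write-up is that the symmetry-adapted basis must be orthonormal with respect to the inner product on $\R[u,v,t]_{\leq d}$ that makes the monomial basis orthonormal (and the intertwiners between copies of $W^\std$ must be isometries), since the positivity of $\tilde Q$ is preserved under congruence, not similarity; the paper handles this by explicitly choosing the $\phi_{i,k}$ to be inner-product-preserving.
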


Instead of using only one positive semidefinite matrix of
dimension~$\binom{d+3}{3}$, as in~\eqref{eq:sos-rep}, to represent a
sum-of-squares polynomial~$p$ of degree~$2d$, the theorem above
exploits the fact that~$p$ is invariant to represent it with three
smaller matrices of dimensions~$a$, $b$, and~$c$. For~$d = 15$ for
instance we have~$\binom{d+3}{3} = 816$, whereas~$a = 174$, $b = 102$,
and~$c = 270$. These smaller matrices correspond to the
block-diagonalization of the matrix~$Q$ in~\eqref{eq:sos-rep}; each of
them is related to one of the three irreducible representations
of~$\Scal_3$.  A proof of this theorem, together with a description of
how to compute the matrices~$V^\trv_d$, $V^\alt_d$, and~$V^\std_d$,
shall be presented in the next section.

When using the theorem above to rewrite constraint~(ii)
of~\eqref{eq:sdp-bound} we have to choose the degrees of the
polynomials~$q_i$. In this regard, since the left-hand side of~(ii) is
a polynomial of degree at most~$2d$, we choose the degree of~$q_0$ to
be~$2d$ and the degree of~$q_i$, for~$i \geq 1$, to be the largest
possible so that~$s_i q_i$ has degree at most~$2d$. These choices are
important for improving numerical stability and performing the
rigorous verification of results presented in~\S\ref{sec:verify}. The
rewritten constraint is as follows:
\def\foo/#1/#2/#3/{\langle #3 V^\trv_{#1}, #2^\trv\rangle + \langle
  #3 V^\alt_{#1}, #2^\alt\rangle + \langle #3 V^\std_{#1}, #2^\std\rangle}
\begin{equation}\label{eq:ii-rewrite}
\begin{split}
&b_{22} + \sum_{k=0}^d \langle S_k^n, F_k\rangle + \foo/d/R_0//\\
&\qquad+ \foo/d-1/R_1/s_1/\\
&\qquad+ \foo/d-2/R_2/s_2/\\
&\qquad+ \foo/d-3/R_3/s_3/\\
&\qquad+ \foo/d-2/R_4/s_4/ = 0,
\end{split}
\end{equation}
with the~$R$ matrices positive semidefinite.


\section{A proof of Theorem~\ref{thm:block-diag}}
\label{sec:thm-proof}

The proof of Theorem~\ref{thm:block-diag} uses some basic facts from
group representation theory; the reader is referred to the book by
Fulton and Harris~\cite{FultonH2004} for background material.

It is simpler to prove a stronger statement that works for any finite
group~$G$ that acts on~$\R^n$ by permuting coordinates, and for that
we need to work with complex polynomials. Since all irreducible
representations of~$\Scal_3$ are real, however, when~$G = \Scal_3$ we
will be able to use only real polynomials, obtaining
Theorem~\ref{thm:block-diag}.

Say~$G$ is a finite group that acts on~$\R^n$ by permuting
coordinates. This induces for every~$d$ a representation of~$G$
on~$\C[x]_{\leq d}$, where~$x = (x_1, \ldots, x_n)$:
\[
\sigma p(x) = p(\sigma^{-1} x)
\]
for all~$p \in \C[x]_{\leq d}$ and~$\sigma \in G$.

Let~$B$ be the set of all monomials on~$x_1$, \dots,~$x_n$ of degree
at most~$d$. Notice that~$G$ acts on~$B$ by permuting monomials, and
so for each~$\sigma \in G$ there is a permutation
matrix~$P_\sigma\colon B \times B \to \{0,1\}$ such that
\[
v_B(\sigma^{-1} x) = P_\sigma^t v_B(x).
\]

Say~$p = v_B^* Q v_B$ is an invariant polynomial,
where~$Q\colon B \times B \to \C$ is (Hermitian) positive
semidefinite. Then, for~$x \in \R^n$,
\[
\begin{split}
p(x) &= \frac{1}{|G|} \sum_{\sigma \in G} \sigma p(x)\\
&= \frac{1}{|G|} \sum_{\sigma \in G} v_B(\sigma^{-1} x)^* Q
v_B(\sigma^{-1} x)\\
&= \frac{1}{|G|} (P_\sigma^t v_B(x))^* Q (P_\sigma^t v_B(x))\\
&= v_B(x)^* \biggl(\frac{1}{|G|} \sum_{\sigma \in G} P_\sigma Q
P_\sigma^t\biggr) v_B(x).
\end{split}
\]
Now, matrix
\[
\overline{Q} = \frac{1}{|G|} \sum_{\sigma \in G} P_\sigma Q P_\sigma^t
\]
is positive semidefinite and defines a linear transformation
on~$\C[x]_{\leq d}$ that commutes with the action of~$G$:
for~$\sigma \in G$ and~$p \in \C[x]_{\leq d}$ we have
\[
\overline{Q}(\sigma p) = \sigma (\overline{Q} p).
\]

Equip~$\C[x]_{\leq d}$ with the inner product~$(\,\cdot\,,\,\cdot\,)$
for which the standard monomial basis~$B$ is an orthonormal
basis. This inner product is invariant under the action of~$G$, and
the representation of~$G$ on~$\C[x]_{\leq d}$ is unitary with respect
to it. So~$\C[x]_{\leq d}$ decomposes as a direct sum of
pairwise-orthogonal irreducible subspaces
\begin{equation}
\label{eq:cx-dec}
\C[x]_{\leq d} = \bigoplus_{i=1}^r \bigoplus_{k=1}^{h_i} W_{i,k},
\end{equation}
where~$W_{i, k}$ is equivalent to~$W_{j, l}$ if and only if~$i = j$.

The space~$\Hom_G(\C[x]_{\leq d}, \C[x]_{\leq d})$ of linear
transformations on~$\C[x]_{\leq d}$ that commute with the action
of~$G$ can be naturally identified with the space
$(\C[x]^*_{\leq d} \otimes \C[x]_{\leq d})^G$ of tensors that are
invariant under the action of~$G$, and
\begin{equation}
\label{eq:dec}
(\C[x]^*_{\leq d} \otimes \C[x]_{\leq d})^G = \bigoplus_{i,j=1}^r
\bigoplus_{k=1}^{h_i}\bigoplus_{l=1}^{h_j} (W_{i,k}^* \otimes W_{j,l})^G.
\end{equation}
Schur's lemma implies that~$(W_{i,k}^* \otimes W_{j,l})^G$ is~$\{0\}$
when~$i \neq j$, and a one-dimensional space whose elements are
isomorphisms between~$W_{i,k}$ and~$W_{i,l}$ when~$i = j$. For
every~$i = 1$, \dots,~$r$ and~$k = 1$, \dots,~$h_i$, we may choose an
isomorphism $\phi_{i,k} \in (W_{i,1}^* \otimes W_{i,k})^G$ that
preserves the inner product in~$\C[x]_{\leq d}$:
\[
(\phi_{i,k} u, \phi_{i,k} v) = (u, v)\qquad\text{for all~$u$,
  $v \in W_{i,1}$}.
\]
Then~\eqref{eq:dec} simplifies, and
any~$\overline{Q} \in (\C[x]^*_{\leq d} \otimes \C[x]_{\leq d})^G$ can
be written as
\[
\overline{Q} = \sum_{i=1}^r \sum_{k,l=1}^{h_i} \lambda_{i,kl}
\phi_{i,l} \phi_{i,k}^{-1}
\]
for some numbers~$\lambda_{i,kl}$. 

For~$i = 1$, \dots,~$r$, let~$e_{i,1}$, \dots,~$e_{i,n_i}$ be an
orthonormal basis of~$W_{i,1}$. Then for~$k = 1$, \dots,~$h_i$ we have
that~$\phi_{i,k}(e_{i,1})$, \dots,~$\phi_{i,k}(e_{i,n_i})$ is an
orthonormal basis of~$W_{i,k}$. Putting all these bases together, we
get an orthonormal basis of~$\C[x]_{\leq d}$ called \textit{symmetry
  adapted}. Transformation~$\overline{Q}$ has a very special structure
when expressed on this basis: for~$i$, $j = 1$, \dots,~$r$, $k = 1$,
\dots,~$h_i$, $l = 1$, \dots,~$h_j$, $\alpha = 1$, \dots,~$n_i$,
and~$\beta = 1$, \dots,~$n_j$, we have
\begin{equation}
\label{eq:block-structure}
(\overline{Q} \phi_{i,k}(e_{i,\alpha}), \phi_{j,l}(e_{j,\beta}))
= \lambda_{i,kl} \delta_{ij} \delta_{\alpha\beta}.
\end{equation}
In particular, we see that~$\overline{Q}$ is positive semidefinite if
and only if the matrices~$\bigl(\lambda_{i,kl}\bigr)_{k,l=1}^{h_i}$
are positive semidefinite.

For linear transformations~$A$,
$B\colon \C[x]_{\leq d} \to \C[x]_{\leq d}$,
write~$\langle A, B\rangle = \tr(B^* A)$. In view
of~\eqref{eq:block-structure}, for~$x \in \R^n$ we then have
\[
\begin{split}
  p(x) &= \langle (v_B v_B^*)(x), \overline{Q}\rangle\\
  &=\sum_{i,j=1}^r \sum_{k=1}^{h_i} \sum_{l=1}^{h_j}
  \sum_{\alpha=1}^{n_i} \sum_{\beta=1}^{n_j}
  ((v_B v_B^*)(x) \phi_{i,k}(e_{i,\alpha}), \phi_{j,l}(e_{j,\beta}))\\[-1.5em]
  &\phantom{=\sum_{i,j=1}^r \sum_{k=1}^{h_i} \sum_{l=1}^{h_j}
    \sum_{\alpha=1}^{n_i}
    \sum_{\beta=1}^{n_j}}\qquad\qquad\qquad\cdot\overline{(\overline{Q}
    \phi_{i,k}(e_{i,\alpha}),
    \phi_{j,l}(e_{j,\beta}))}\\[-1.5em]
  &=\sum_{i=1}^r \sum_{k,l=1}^{h_i} \overline{\lambda_{i,kl}}
  \sum_{\alpha=1}^{n_i} ((v_B v_B^*)(x) \phi_{i,k}(e_{i,\alpha}),
  \phi_{i,l}(e_{i,\alpha}))\\
  &=\sum_{i=1}^r \sum_{k,l=1}^{h_i} \overline{\lambda_{i,kl}}
  \sum_{\alpha=1}^{n_i} \phi_{i,k}(e_{i,\alpha})(x)
  \overline{\phi_{i,l}(e_{i,\alpha})}(x),
\end{split}
\]
where $\overline{\phi_{i,l}(e_{i,\alpha})}$ is the polynomial
obtained from~$\phi_{i,l}(e_{i,\alpha})$ by conjugating every
coefficient.

So by taking as~$V^i_d$, for~$i = 1$, \dots,~$r$, the matrix whose
entry~$(k, l)$ is equal to the polynomial
\[
\sum_{\alpha=1}^{n_i} \phi_{i,k}(e_{i,\alpha})(x)
  \overline{\phi_{i,l}(e_{i,\alpha})}(x)
\]
we get
\[
p(x) = \sum_{i=1}^r \langle V^i_d(x),
\bigl(\lambda_{i,kl}\bigr)_{k,l=1}^{h_i}\rangle.
\]
Since moreover for any choice of~$\lambda_{i,kl}$ we get, by
construction, an invariant polynomial, the polynomials in the~$V^i_d$
matrices must be invariant. Finally, matrix~$V_d^i$ has
dimension~$h_i$, the multiplicity of~$W_{i,1}$ in the decomposition
of~$\C[x]_{\leq d}$. Hence, if~$N$ is the dimension
of~$\C[x]_{\leq d}$ and~$n_i$ is the dimension of~$W_{i,1}$, then
\[
N = \sum_{i=1}^r n_i h_i.
\]

So we see that each matrix~$V^i_d$ corresponds to one of the
irreducible representations of~$G$ that appear in the decomposition
of~$\C[x]_{\leq d}$. Moreover, all we need to compute~$V^i_d$ is the
symmetry-adapted basis, and for that we need
decomposition~\eqref{eq:cx-dec} and the~$\phi_{i,k}$ isomorphisms,
both of which can be computed using standard linear algebra. In
practice, however, a projection formula such as the one found in~\S2.7
of the book by Serre~\cite{Serre1977} can be used to compute the
symmetry-adapted basis directly, given that we know all irreducible
representations of~$G$.

Matrices~$V^i_d$ might have polynomials with complex coefficients, and
some of the~$\lambda_{i,kl}$ might be complex numbers, even if~$p$ is
a real polynomial. This is unavoidable in general, but when~$G$ has
only real irreducible representations (i.e., representations that can
be expressed by real matrices), all computations involve only real
numbers and the matrices~$V^i_d$ contain only real polynomials; as a
result, all the~$\lambda_{i,kl}$ can be taken real.

Every symmetric group has only real irreducible representations (see
e.g.\ Chapter~4 of the book by Fulton and
Harris~\cite{FultonH2004}). The symmetric group on three
elements,~$\Scal_3$, has only three irreducible representations: the
\textit{trivial} and \textit{alternating} representations, both of
dimension one, and the \textit{standard} representation, of dimension
two. All of them appear in the decomposition~\eqref{eq:cx-dec}
of~$\C[u, v, t]_{\leq d}$, and so we get Theorem~\ref{thm:block-diag}.


\section{Results}
\label{sec:results}

We solve problem~\eqref{eq:sdp-bound} with constraints~(i) and~(ii)
replaced by~\eqref{eq:i-rewrite} and~\eqref{eq:ii-rewrite},
respectively. These constraints are polynomial identities that have to
be expanded on bases of the corresponding vector spaces to produce
linear constraints in the problem variables, as explained
in~\S\ref{sec:sdp-bound}. For constraint~\eqref{eq:i-rewrite}, we
simply take the standard monomial basis of~$\R[u]_{\leq 2d}$. For
constraint~\eqref{eq:ii-rewrite}, we note that all polynomials
involved are invariant, so we have fewer constraints if we use a basis
of the subspace of invariant polynomials of~$\R[u,v,t]_{\leq 2d}$. One
way to find such basis is to consider all triples~$(a,b,c)$ of
nonnegative integers such that~$a +2b + 3c \leq 2d$ and for each
triple take the polynomial~$(u+v+t)^a(u^2+v^2+t^2)^b(u^3+v^3+t^3)^c$.
By Proposition~1.1.2 of Sturmfels~\cite{Sturmfels2008}, these
polynomials generate the subspace of invariant polynomials of degree
at most~$2d$, and by Theorem~1.1.1 of the same book together with a
dimension argument, they actually form a basis of this subspace.

The application of symmetry reduction lead to big improvements in
practice. For instance, the high-precision solver
SDPA-GMP~\cite{Nakata2010} with~200 bits of precision running on
a~2.4GHz processor took~9 days to solve the problem for~$n = 12$
and~$d = 11$ without symmetry reduction. After the reduction, the
resulting semidefinite program could be solved in less than~12 hours.

In this way, it was possible to make computations with~$d$ up to~$16$
within a computing time of~6 weeks and get new upper bounds for the
kissing number on dimensions~9 to~23, improving the results given by
Mittelmann and Vallentin~\cite{MittelmannV2010}. 

\begin{table}[t]
\begin{adjustbox}{center}
{\footnotesize
\begin{tabular}{ccccccccccc}
    &               &     & \textsl{previous}                 &                   & &     &               &     & \textsl{previous}                 &                  \\
$n$ & \textsl{l.b.} & $d$ & \textsl{u.b.~\cite{MittelmannV2010}} & \textsl{new u.b.} & & $n$ & \textsl{l.b.} & $d$ & \textsl{u.b.~\cite{MittelmannV2010}} & \textsl{new u.b.}\\
\rowcolor{gray!20}
3&12&14&12.38180947&12.381921&&14&1606&14&3183.133169&3183.348148\\
\rowcolor{gray!20}
&&15&&12.374682&&&&15&&3180.112464\\
\rowcolor{gray!20}
&&16&&12.368591&&&&16&&\underline{3177}.917052\\
4&24&14&24.06628391&24.066298&&15&2564&14&4866.245659&4866.795537\\
&&15&&24.062758&&&&15&&4862.382161\\
&&16&&24.056903&&&&16&&\underline{4858}.505436\\
\rowcolor{gray!20}
5&40&14&44.99899685&44.999047&&16&4320&14&7355.809036&7356.238006\\
\rowcolor{gray!20}
&&15&&44.987727&&&&15&&7341.324655\\
\rowcolor{gray!20}
&&16&&44.981067&&&&16&&\underline{7332}.776399\\
6&72&14&78.24061272&78.240781&&17&5346&14&11072.37543&11073.844334\\
&&15&&78.212731&&&&15&&11030.170254\\
&&16&&78.187761&&&&16&&\underline{11014}.183845\\
\rowcolor{gray!20}
7&126&14&134.4488169&134.456246&&18&7398&14&16572.26478&16575.934858\\
\rowcolor{gray!20}
&&15&&134.330898&&&&15&&16489.848647\\
\rowcolor{gray!20}
&&16&&134.270201&&&&16&&\underline{16469}.090329\\
9&306&14&364.0919287&364.104934&&19&10668&14&24812.30254&24819.810569\\
&&15&&363.888016&&&&15&&24654.968481\\
&&16&&\underline{363}.675154&&&&16&&\underline{24575}.871259\\
\rowcolor{gray!20}
10&500&14&554.5075418&554.522392&&20&17400&14&36764.40138&36761.630730\\
\rowcolor{gray!20}
&&15&&554.225840&&&&15&&36522.436885\\
\rowcolor{gray!20}
&&16&&\underline{553}.827497&&&&16&&\underline{36402}.675795\\
11&582&14&870.8831157&870.908146&&21&27720&14&54584.76757&54579.036297\\
&&15&&869.874183&&&&15&&54069.067238\\
&&16&&\underline{869}.244985&&&&16&&\underline{53878}.722941\\
\rowcolor{gray!20}
12&840&14&1357.889300&1357.934329&&22&49896&14&82340.08003&82338.035075\\
\rowcolor{gray!20}
&&15&&1357.118955&&&&15&&81688.317095\\
\rowcolor{gray!20}
&&16&&\underline{1356}.603728&&&&16&&\underline{81376}.459564\\
13&1154&14&2069.587585&2069.675634&&23&93150&14&124416.9796&124509.320059\\
&&15&&2067.388613&&&&15&&123756.492951\\
&&16&&\underline{2066}.405173&&&&16&&\underline{123328}.397290\\
\end{tabular}
}
\end{adjustbox}
\bigskip

\caption{Lower and upper bounds (l.b.\ and u.b.) for the kissing number in
  dimensions~3, \dots,~24. Dimensions~8 and~24 are omitted since in
  these dimensions the linear programming bound is tight. All lower bounds
  can be found in the book of Conway and Sloane~\cite{ConwayS1988},
  except for dimensions~13 and~14, in which case they were obtained by
  Ericson and Zinoviev~\cite{ZinovievE1999}. Improvements over previously
  known upper bounds are underlined. All new bounds reported have been
  rigorously verified; see~\S\ref{sec:verify}.}
\label{tab:bounds}
\vskip-5mm{}
\end{table}

The results are shown on Table~\ref{tab:bounds}. Following Mittelmann
and Vallentin, the table includes different values of~$d$ and decimal
digits, since the sequence of values gives a clue about how strong the
bound of Bachoc and Vallentin~\cite{BachocV2008} can be if polynomials
of higher degree are used. This is not the case for the linear
programming bound, where the increase in degree does not give
significant improvements~\cite{OdlyzkoS1979}. Even the decimal digits
in dimension~4 are interesting, since a tight bound can provide
information about the optimal configurations (it is still an open
problem whether the configuration of~24 points in dimension~4 is
unique; for dimensions~8 and~24 uniqueness was proved by Bannai and
Sloane~\cite{BannaiS1981} using the linear programming bound).

Finally, we observe that most values for~$d = 14$ are in fact
bigger than the corresponding values provided by Mittelmann and
Vallentin~\cite{MittelmannV2010}, as the problems solved are not exactly
the same: polynomials~$s_i$ and~$g_i$, used to represent~$\Delta$, are
different.

\section{Rigorous verification of results}
\label{sec:verify}

Floating-point arithmetic is used both in the process of computing the
input to the solver (in particular when computing the symmetry-adapted
basis) and by the solver itself. So the solution obtained by the
solver is likely not feasible and hence its objective value might not
be an upper bound to the kissing number. If the solution is, however,
composed by positive \textit{definite} matrices and is close enough to
being feasible, it is possible to prove that it can be turned into a
feasible solution without changing its objective value, thus showing
that its objective value is an upper bound for the kissing number.

The idea is very similar to the one used by Dostert, Guzmán, Oliveira,
and Vallentin~\cite{DostertGOV2015}. The first step is to find a good
solution to our problem, namely one satisfying the following
condition: \textit{the minimum eigenvalue of any matrix is large
  compared to the maximum violation of any constraint}. (The precise
meaning of ``large'' will be clarified soon.) If this condition is
satisfied, then it is possible to turn the solution into a feasible
one, without changing its objective value.

Next, we need to verify rigorously that the solution satisfies the
condition. It is not enough for such a verification procedure to use
floating-point arithmetic, since then we cannot be sure of the
correctness of the computations. We will see how rigorous bounds on
the minimum eigenvalue of each matrix and also on the violation of
each constraint can be obtained using high-precision interval
arithmetic.

The first step is to obtain a good solution. To get small constraint
violations, we need to use a high-precision solver; we use
SDPA-GMP~\cite{Nakata2010} with~200 bits of precision. Usually,
solvers will return a solution that lies close to the boundary of the
cone of positive semidefinite matrices, and so the minimum eigenvalues
of the solution matrices will be very close to zero. To get a solution
with large minimum eigenvalues, we solve the problem with a change of
variables: we fix~$\lambda_{\min} > 0$ and replace each variable~$X$
by~$X' + \lambda_{\min} I$ with~$X' \succeq 0$. This gives a solution
where~$X$ has minimum eigenvalue at least~$\lambda_{\min}$, but of
course the objective value increases as~$\lambda_{\min}$
increases. Parameter~$\lambda_{\min}$ has to be chosen small enough so
that the loss in objective value is small, but large enough in
comparison to the constraint violations. Choosing an
appropriate~$\lambda_{\min}$ is a matter of trial and error; we
observed that values around~$10^{-8}$ or~$10^{-10}$ work well in
practice. To be able to choose a strictly positive~$\lambda_{\min}$, a
feasible solution consisting of positive definite matrices must
exist. So we need to avoid dependencies in our formulation; this is
one reason why it is important to carefully choose the degrees of the
polynomials appearing in~\eqref{eq:ii-rewrite}.

To carry out the rigorous verification, it is convenient to rewrite
constraint~\eqref{eq:ii-rewrite} without using
Theorem~\ref{thm:block-diag}, that is, using only one large positive
semidefinite matrix for each sum-of-squares polynomial.  If we use the
standard monomial basis~$B_d$ for~$\R[u,v,t]_{\leq d}$, then
matrix~$V_d = v_{B_d}v_{B_d}^t$ is easy to construct and all numbers
appearing in the input are rational. Constraint~\eqref{eq:ii-rewrite}
becomes
\begin{multline}
\label{eq:ii-rerewrite}
 b_{22} + \sum_{k=0}^d \langle S_k^n, F_k\rangle + \langle V_d, R_0\rangle
 + \langle s_1V_{d-1}, R_1\rangle + \langle s_2V_{d-2}, R_2\rangle\\ + 
 \langle s_3V_{d-3}, R_3\rangle + \langle s_4V_{d-2}, R_4\rangle = 0.
\end{multline}

We can convert the solution obtained by the solver for a problem with
constraint~\eqref{eq:ii-rewrite} into a solution for the problem
where~\eqref{eq:ii-rewrite} is replaced
by~\eqref{eq:ii-rerewrite}. Indeed, note that in the process described
in~\S\ref{sec:thm-proof} matrix~$\overline{Q}$ becomes block-diagonal
when expressed in the symmetry-adapted basis
(cf. equation~\eqref{eq:block-structure}), so the conversion between
constraints amounts to a change of basis. The problem size increases,
since the matrices in the sum-of-squares formulation will not be
block-diagonal anymore, but this is not an issue since the problem is
already solved and the conversion is not an expensive operation.

Once we have a good solution to our reformulated problem, it is time
to carry out the verification. For each variable~$X$, we use
high-precision floating-point arithmetic to perform a binary search to
find a large~$\lambda_X > 0$ such that~$X - \lambda_X I$ has a
Cholesky decomposition~$LL^t$. Typically, this~$\lambda_X$ is a bit
smaller than the~$\lambda_{\min}$ used to find the solution. Now, we
convert the floating-point matrix~$L$ to a rational
matrix~$\overline{L}$ and set
\[
\overline{X} = \overline{L}\hskip1pt\overline{L}^t + \lambda_X I,
\]
so that~$\overline{X}$ is a rational matrix. Doing this for every
matrix variable, we obtain a rational almost-feasible solution of our
problem together with a rigorous lower bound on the minimum eigenvalue
of each matrix.

Next we check that the violation of the equality constraints
in~\eqref{eq:i-rewrite} and~\eqref{eq:ii-rerewrite} for our rational
almost-feasible solution is small compared to the minimum
eigenvalues. Both cases are similar, so let us think of
constraint~\eqref{eq:ii-rerewrite}. We now have a rational
polynomial~$r$ that is the left-hand side of~\eqref{eq:ii-rerewrite},
which will likely not be the zero polynomial. Note however that all
monomials of degree at most~$2d$ appear as entries of~$V_d$, so there
is a rational matrix~$A$ such that~$r = \langle V_d, A\rangle$.
Replacing~$\overline{R_0}$ by~$\overline{R_0} - A$, we manage to
satisfy constraint~\eqref{eq:ii-rerewrite}.

To ensure that~$\overline{R_0} - A \succeq 0$ it suffices to require
that~$\|A\| = \langle A, A\rangle^{1/2} \leq \lambda_{R_0}$, and this
condition can be verified directly from~$r$. Notice moreover that
changing~$\overline{R_0}$ does not change the objective value of the
solution. In practice, computing~$\overline{X}$ using rational
arithmetic can be computationally costly. Since we only care about
comparing~$\|A\|$ with the bound on the minimum eigenvalue, we do not
need to use rational arithmetic: it is sufficient to use, say,
high-precision interval arithmetic, as provided for instance by a
library such as MPFI~\cite{RevolR2005}.

The solutions that
  provide all the new upper bounds given on Table~\ref{tab:bounds} as
  well as the verification script described above are available at
\[
\hbox{\tt http://www.ime.usp.br/\~{}fabcm/kissing-number}
\]


\end{document}